\documentclass[psamsfonts]{amsart}

\usepackage{amssymb,amsfonts,amsmath}
\usepackage[all,arc]{xy}
\usepackage{enumerate}
\usepackage{mathrsfs}
\usepackage{hyperref}
\usepackage{csquotes}

\newtheorem{theorem}{Theorem}[section]
\newtheorem{corollary}[theorem]{Corollary}
\newtheorem{proposition}[theorem]{Proposition}
\newtheorem{lemma}[theorem]{Lemma}

\theoremstyle{definition}
\newtheorem{definition}[theorem]{Definition}

\newtheorem{example}[theorem]{Example}

\newtheorem{remark}[theorem]{Remark}

\setcounter{section}{-1}

\makeatletter

\makeatother
\numberwithin{equation}{section}

\begin{document}
	\title[On Hardy's Apology Numbers]{On Hardy's Apology Numbers}
	
	\author[Henk Koppelaar]{\bfseries Henk Koppelaar}
	
	\address{Henk Koppelaar\\
		Faculty of Electrical Engineering, Mathematics and Computer Science\\
		Delft University of Technology\\
		Delft\\
		The Netherlands}
	\email{Koppelaar.Henk@gmail.com}
	
	\author[Peyman Nasehpour]{\bfseries Peyman Nasehpour}
	
	\address{Peyman Nasehpour\\
		Department of Engineering Science \\
		Golpayegan University of Technology\\
		Golpayegan\\
		Iran}
	\email{nasehpour@gut.ac.ir, nasehpour@gmail.com}
	
	\subjclass[2010]{00A08, 97R80}
	
	\keywords{Hardy's apology numbers, Armstrong numbers, Dudeney numbers, Wells numbers}
	
	\maketitle
	
	\begin{abstract}
		Twelve well known `Recreational' numbers are generalized and classified in three generalized types Hardy, Dudeney, and Wells. A novel proof method to limit the search for the numbers is exemplified for each of the types. Combinatorial operators are defined to ease programming the search.
	\end{abstract}
	
	\section{Introduction} 
	
	``Recreational Mathematics'' is a broad term that covers many different areas including games, puzzles, magic, art, and more \cite{Sigmaa2018}. Some may have the impression that topics discussed in recreational mathematics in general and recreational number theory, in particular, are only for entertainment and may not have an application in mathematics, engineering, or science. As for the mathematics, even the simplest operation in this paper, i.e. the sum of digits function, has application outside number theory in the domain of combinatorics \cite{ClementsLindstrom1965, Lindstrom1964,Lindstrom1965,Lindstrom1966,PatelSiksek2017} and in a seemingly unrelated mathematical knowledge domain: topology \cite{GitlerMahowaldMilgram1968,Hirsch1967,DavisMahowald1975}. Papers about generalizations of the sum of digits function are discussed by Stolarsky \cite{Stolarsky1977}. It also is a surprise to see that another topic of this paper, i.e. Armstrong numbers, has applications in ``data security'' \cite{DeepaKannimuthuKeerthika2011}. 
	
	In number theory, functions are usually non-continuous. This inhibits solving equations, for instance, by application of the contraction mapping principle because the latter is normally for continuous functions. Based on this argument, questions about solving number-theoretic equations ramify to the following:
	
	\begin{enumerate}
		
		\item Are there any solutions to an equation? 
		\item If there are any solutions to an equation, then are finitely many solutions?
		\item Can all solutions be found in theory?
		\item Can one in practice compute a full list of solutions?
		
	\end{enumerate}
	
	The main purpose of this paper is to investigate these constructive (or algorithmic) problems by the fixed points of some special functions of the form $f: \mathbb N \rightarrow \mathbb N$. We confine computations to the digits of a natural number $n$ in base $b$ (more interestingly in base $b=10$). Answers to the questions above are exemplified by proofs of theorems on some questions and equations in recreational number theory. The term recreation is kept here because of our bias to consider digits in the domain of $f$.
	
	The main trait of recreational questions in number theory is the puzzle aspect: if only a few rare solutions to an algorithmically easy question exist, then it often is recreational. Number theory, however, poses problems that cover universal properties, for instance, about sums and products in finite fields in general \cite{BourgainKatzTao2006}, or Blomer \cite{Blomer2008} asks if there exists a subset of $A$ of $\mathbb N$ such that every $n \not\equiv 0,4,7 \pmod 8$ can be represented as the sum of three squares in $A$? This has been done by Z\"{o}llner \cite{Zollner1985} for four squares, we have not found a reference to a full solution of this three squares question. Another illustrative example of this difference compared to problems in this paper is to find solutions of $n = (p_1 y_1)^2 + (p_2 y_2)^2 + (p_3 y_3)^2$ with $p_1, p_2, p_3$ odd and different primes, as large as possible such that the main term for the number of representations still dominates the error term \cite{Blomer2008}. This professional reasoning about error terms is not done in recreational texts.
	
	We narrow the gap between number theory and recreational number theory in this paper by adding a tool {\em to find the number of solutions to posed problems} for the recreationists. In view of the above mentioned facts, we give a brief sketch of the contents of this paper: In Section \ref{sec:Hardy}, based on some entertaining examples in Hardy's book \cite{Hardy1940} and other resources \cite{Pickover1995,vanBerkel2009}, we introduce base $b$ $F$-Hardy's apology numbers as follows: Let $F: \mathbb N_0 \rightarrow \mathbb N$ be a function, $b \in \mathbb N -\{1\}$ and $a_i$ be a non-negative integer number such that $a_i \leq b-1$ for all $0 \leq i \leq m-1$. We say that a natural number $n = \overline{a_{m-1} \cdots a_1 a_0}$ is a base $b$ $F$-Hardy's apology number if the following equality is satisfied: \[ H_1\colon n = \sum_{i=0}^{m-1} a_i b^i = \sum _{i=0}^{m-1} F(a_i). \qquad \text{($F$-Hardy's apology equation)}\]
	
	Then we show that for a specific $F$, the cardinality of all $F$-Hardy's apology numbers obtained from the $H_1$ equation mentioned above is finite (see Definition \ref{Fsum} and Theorem \ref{FsumPA}).
	
	Let us recall that an $n-$digit number in base $b$ is called a base-$b$ Armstrong number of $n^{\text{th}}$ order if it is equal to the sum of the $n^{\text{th}}$ power of its digits in base $b$ (see Definition \ref{Armstrongnumbers}). In Section \ref{sec:Armstrong}, we discuss Armstrong numbers and their generalization. It happens that Armstrong numbers have applications in ``data security'' \cite{Belose2012,Bansode2014,DeepaKannimuthuKeerthika2011}.
	
	If we denote the number of digits of $n$ in base $b$ by $D_b(n)$, then in Section \ref{sec:Wells}, we call a number to be a base $b$ $F$-Wells number, if $n = D_b(F(n))(=m)$, i.e., $n$ is the fixed point element of the function $D_b \circ F$, where $F: \mathbb N \rightarrow \mathbb N$ is a function and $F(n) = \sum_{i=0}^{m-1} a_i b^i$ the representation of $F(n)$ in base $b\geq 2$ (see Definition \ref{Wellsnumbers}). In Corollary \ref{FWellsnumbers}, we show that if $F: \mathbb N \rightarrow \mathbb N$ is a function, there are finitely many base $b$ $F$-Wells numbers, if one of the following statements holds:
	
	\begin{enumerate}
		
		\item There is a natural number $N$ such that $n \geq N$ implies that $F(n) \geq b^n$.
		
		\item There is a natural number $N$ such that $n \geq N$ implies that $F(n) < b^{n-1}$.
		
	\end{enumerate}
	
	Let us recall that if a non-negative integer $n$ has a representation $n = \sum_{i=0}^{m-1} a_i b^i$ in base $b \geq 2$, then the sum of the digits of $n$ is denoted by $S_b(n) = \sum_{i=0}^{m-1} a_i$ \cite[Theorem 6.5.1]{AndreescuAndrica2009}. Section \ref{sec:Dudeney} is devoted to a generalization of Dudeney numbers. Let $F: \mathbb N \rightarrow \mathbb N$ be a function and $F(n) = \sum_{i=0}^{m-1} a_i b^i$ be the representation of $F(n)$ in base $b\geq 2$. In Definition \ref{Dudeneynumbers}, we call a number $n$ to be a base $b$ $F$-Dudeney number, if $n = S_b(F(n))$, where $S_b(F(n)) = \sum_{i=0}^{m-1} a_i$. In Corollary \ref{DudeneyCor}, we prove that if $F: \mathbb N \rightarrow \mathbb N$ is a function such that $\displaystyle \lim_{n \rightarrow +\infty} \frac{F(n)}{b^{\frac{n-b+1}{b-1}}} = 0$, then there are finitely many base $b$ $F$-Dudeney numbers.
	
	In Section \ref{sec:PowersSumDigits}, we show that the number of natural numbers which are equal to the sum of their digits raised to a specific power is finite. In fact, in Theorem \ref{PowersSumDigits}, we prove that if $n$ is a natural number, $S_b(n)$ is the sum of the digits of the number $n$ in base $b$, $p \geq 2$ is a natural number, and $\phi_p : \mathbb N \rightarrow \mathbb N$ is defined with $\phi_p (n) = (S_b(n))^p$, then the number of natural numbers satisfying the equality $\phi_p (n) = n$ is finite. Moreover, we show that if $\phi_p (n) = n$ then $n \leq b^{p^2}$.
	
	This paper is in the continuation of our interest in Algorithms and Computation \cite{AghaieabianeKoppelaarNasehpour20171},\cite{AghaieabianeKoppelaarNasehpour20172},\cite{JelodarMoazzamiNasehpour2016},\cite{Nasehpour2020}.
	
	\section{A Generalization of Hardy's Apology Numbers}\label{sec:Hardy}
	
	English mathematician, Godfrey Harold Hardy (1877--1947), in his historical book on mathematics, with the title ``\emph{A Mathematician's Apology}'', said that ``there are just four numbers [after 1] which are the sums of the cubes of their digits''. Moreover, he said that ``8712 and 9801 are the only four-figure numbers which are integral multiples of their reversals'', and he also explained that these were not serious theorems, as they were not capable of any serious generalization  \cite{Hardy1940}. He did not imagine that the same numbers have become used in encryption \cite{Belose2012,Bansode2014,DeepaKannimuthuKeerthika2011}. Clifford A. Pickover, in his book \cite{Pickover1995} on page 169, defines a number to be factorion, if it is equal to the sum of the factorial values for each of its digits. Daan van Berkel on page 2 of his paper \cite{vanBerkel2009}, defines a number to be M\"{u}nchhausen in base $b$ if $n =  \overline{a_{m-1} \cdots a_1 a_0} = \sum_{i=0}^{m-1} a_i b^i $, then $n = \sum _{i=0}^{m-1} a_{i}^{a_i}$. An interesting  M\"{u}nchhausen number in base 10 is $3435 = 3^3 + 4^4 + 3^3 + 5^5$, the number that appears in the title of his paper. On the other hand, we can consider those numbers which are the sums of the cubes of their 2-grouped digits like the number $165033$ which is equal to $16^3 +50^3 + 33^3$ or $221859$ which is equal to $22^3+18^3+59^3$.
	The title given to his book reflects Hardy's modesty to colleagues and the world, as with the Persian ``Tarof'' (a Persian traditional modesty to refuse a gift, or benefit even if it is highly wanted). The tarof in mathematics is explained by Azarang \cite{Azarang2011}. We believe that Hardy is the first mathematician to express it and we honor him for it in the title of this paper.
	
	All these entertaining observations and amusing examples may inspire a curious mind to search for other numbers with similar properties. To this end, we give the following definition:
	
	\begin{definition}
		
		\label{Fsum}
		
		Let $F: \mathbb N_0 \rightarrow \mathbb N$ be a function, $b \in \mathbb N -\{1\}$ and $a_i$ be a non-negative 1913integer number such that $a_i \leq b-1$ for all $0 \leq i \leq m-1$. We say that a natural number $n = \overline{a_{m-1} \cdots a_1 a_0}$ is a base $b$ $F$-Hardy's apology number if the following equality is satisfied: \[ H_1\colon n = \sum_{i=0}^{m-1} a_i b^i = \sum _{i=0}^{m-1} F(a_i). \qquad \text{($F$-Hardy's apology equation)}\]
		
		Moreover, if $k \in \mathbb N$, we define a natural number $n = \overline{a_{km-1} \cdots a_1 a_0}$ to be a base $b$ $k$-grouped $F$-Hardy's apology number if the following equality holds: $$ H_k \colon n = \sum_{i=0}^{km-1} a_i b^i = \sum _{i=0}^{m-1} F( \overline{a_{ki+k-1} \cdots a_{ki+1}a_{ki}}).$$ $$ \qquad \text{($k$-grouped $F$-Hardy's apology equation)}$$
		
	\end{definition}
	
	The main result of this section is to show that for a given function $F: \mathbb N_0 \rightarrow \mathbb N$, the cardinality of the set of all base $b$ $k$-grouped $F$-Hardy's apology numbers is finite. For the ease of our argument, first, we give the following lemma:
	
	\begin{lemma}
		
		\label{pnh}
		
		For each natural number $m$, the following statements hold:
		
		\begin{enumerate}
			
			\item If $m\geq 7$, then $2^{m-1} - m^2 > 0$ and 7 is the best lower bound for $m$ such that this inequality holds.
			
			\item If $m \geq 4$, then $3^{m-1} - m^2 > 0$ and 4 is the best lower bound for $m$ such that this inequality holds.
			
			\item If $m \geq 3$, then $4^{m-1} - m^2 > 0$ and 3 is the best lower bound for $m$ such that this inequality holds.
			
			\item If $m \geq 2$ and $b \geq 5$, then $b^{m-1} - m^2 > 0$ and 2 is the best lower bound for $m$ such that this inequality holds.
			
		\end{enumerate}
		
		\begin{proof}
			
			(1): The proof is by induction on $m\geq 7$. Set $a_m = 2^{m-1} - m^2$. Clearly, $a_7 = 2^6 - 7^2 = 15 > 0$. Now imagine $a_m > 0$ and we prove that $a_{m+1} > 0$. Since $k \geq 7$, we have that \[a_{m+1} - a_m = 2^m - (m+1)^2 - 2^{k-1} + m^2 = 2^{m-1} -2m -1 > 2^{m-1} - m^2 = a_m > 0.\] This already means that $a_{m+1} > 0$ and the proof by induction is complete. Note that $a_6 = 2^5 - 6^2 = 32 - 36 = -4$.
			
			(2): Set $b_m = 3^{m-1} - m^2$. It is clear that $b_m > a_m$. So for $m\geq 7$, we have that $b_m > 0$. On the other hand, $b_6 = 3^5 - 6^2 = 207$, $b_5 = 3^4 - 5^2 = 56$, $b_4 = 3^3 - 4^2 = 11$, while $b_3 = 3^2 - 3^2 = 0$ and this is what we wanted to show.
			
			(3): Define $c_m = 4^{m-1} - m^2$. Clearly, $c_m > b_m$. So by (2), $c_m > 0$ holds for each $m \geq 4$. But $c_3 = 4^2 - 3^2 = 7$, while $c_2 = 4 - 2^2 = 0$.
			
			(4): Set $d_m = 5^{m-1} - m^2$. Obviously by (3), $d_m > 0$ holds for each $m \geq 3$. Also, note that $d_2 = 5 - 2^2 = 1$, while $d_1 = 0$. Now, define $f_m = b^{m-1} - m^2$, where $b \geq 6$. Clearly, $f_m > d_m$ for any $m \geq 2$ and finally, $f_1 = 0$ and the proof is complete.
		\end{proof}
		
	\end{lemma}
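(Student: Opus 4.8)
The plan is to prove part (1) directly by induction and then to obtain parts (2)--(4) almost for free from the elementary observation that $b^{m-1}$ is strictly increasing in the base $b$. For (1) I would induct on $m \geq 7$ with $a_m = 2^{m-1} - m^2$. The base case is the single computation $a_7 = 64 - 49 = 15 > 0$. For the inductive step I would form the telescoped difference $a_{m+1} - a_m = 2^m - (m+1)^2 - 2^{m-1} + m^2 = 2^{m-1} - 2m - 1$, and note that for $m \geq 7$ (in fact already for $m \geq 3$) one has $2m + 1 < m^2$, so $a_{m+1} - a_m > 2^{m-1} - m^2 = a_m > 0$. This forces $a_{m+1} > a_m > 0$ and closes the induction. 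Optimality of the threshold $7$ then follows from the single evaluation $a_6 = 32 - 36 = -4 < 0$, which shows the strict inequality fails at $m = 6$.

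For parts (2), (3), and (4) the key reduction is monotonicity in the base: if $b' > b \geq 2$, then $(b')^{m-1} - m^2 > b^{m-1} - m^2$ for every $m \geq 1$. Consequently the positivity established in (1) for $m \geq 7$ propagates immediately to all larger bases, so in each case it remains only to verify the strict inequality on the finitely many values of $m$ below $7$ down to the claimed threshold, together with the first value at which it fails. Thus for (2), with $b_m = 3^{m-1} - m^2$, I would record $b_6, b_5, b_4 > 0$ and $b_3 = 0$; for (3), with $c_m = 4^{m-1} - m^2$, that $c_3 > 0$ while $c_2 = 0$; and for (4), that $5^{m-1} - m^2 > 0$ for all $m \geq 3$ (again by (3) plus base monotonicity), that $b - 4 > 0$ at $m = 2$ whenever $b \geq 5$, and that the quantity equals $0$ at $m = 1$. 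In each instance the boundary value is $0$ or negative, so the strict inequality genuinely fails one step below, establishing that the stated lower bound is best possible.

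I do not expect a serious obstacle here: the only step carrying real content is the base-$2$ induction in (1), where the critical point is to recognize that the telescoped difference $2^{m-1} - 2m - 1$ itself dominates $a_m$, so the sequence is not merely eventually positive but strictly increasing from $m = 7$ onward. Everything else is the monotonicity reduction plus a short finite table of values, so the principal care is bookkeeping — making sure that, for each base, exactly the one boundary case needed to justify the optimality claim is checked.
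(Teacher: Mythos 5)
Your proposal is correct and follows essentially the same route as the paper's own proof: induction on $m$ for the base-$2$ case via the telescoped difference $2^{m-1}-2m-1 > 2^{m-1}-m^2$, then monotonicity of $b^{m-1}-m^2$ in the base $b$ to propagate positivity downward in $m$ for larger bases, together with the finite table of small values and the single boundary evaluation ($a_6=-4$, $b_3=0$, $c_2=0$, $f_1=0$) to certify that each threshold is best possible. The only difference is cosmetic: your write-up avoids the stray variable-name slips ($k$ for $m$) present in the paper's inductive step.
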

	
	\begin{theorem}
		
		\label{FsumPA}
		
		Let $F: \mathbb N_0 \rightarrow \mathbb N$ be a function, $b \in \mathbb N -\{1\}$ and $a_i$ be a non-negative integer such that $a_i \leq b-1$, for all $0 \leq i \leq km-1$, where $k$ is a fixed positive integer and $m$ is an arbitrary positive integer. Set $$s_k = \max \{F(0), F(1), F(2), \ldots, F(\underbrace{\overline{(b-1) \cdots (b-1)(b-1)}}_{k-\textrm{times}})\}.$$ Then the following statements hold:
		
		\begin{enumerate}
			
			\item[(H)] \label{Equationk} If $a_{km-1} \neq 0$, then the following equation has finitely many solutions: $$ \sum_{i=0}^{km-1} a_i b^i = \sum _{i=0}^{m-1} F( \overline{a_{ki+k-1} \cdots a_{ki+1}a_{ki}}).$$
			
		\end{enumerate}
		
		Moreover, if an algorithm is designed to find all the solutions of the equation mentioned in the statement (H), then the solutions are needed to be checked for all $ n = \sum_{i=0}^{km-1} a_i b^i\leq b^{km-1}$, where $m$ is as follows:
		
		\begin{enumerate}
			
			\item If $k=1$ and $b=2$, then $m = \max\{7, s_1\}$.
			
			\item If $k=1$ and $b=3$, then $m = \max\{4,s_1\}$.
			
			\item If $k=1$ and $b=4$, then $m = \max\{3,s_1\}$.
			
			\item If $k$ is arbitrary and $b \geq 5$, then $m = \max\{2,s_k\}$.
		\end{enumerate}
		
	\end{theorem}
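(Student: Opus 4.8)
The plan is to trap a putative solution between an exponential lower bound and a linear upper bound, and then invoke Lemma~\ref{pnh} to see that the exponential wins for all but finitely many digit-lengths. Fix the grouping parameter $k$ and suppose $n = \sum_{i=0}^{km-1} a_i b^i$ solves the equation in (H) with $a_{km-1} \neq 0$. First I would bound the two sides of the equation separately. On the left, the condition $a_{km-1} \geq 1$ forces
\[
n = \sum_{i=0}^{km-1} a_i b^i \geq a_{km-1} b^{km-1} \geq b^{km-1}.
\]
On the right, every argument $\overline{a_{ki+k-1}\cdots a_{ki}}$ is a base-$b$ integer with at most $k$ digits, hence lies in $\{0, 1, \ldots, b^k - 1\}$; since $s_k$ is by definition the maximum of $F$ over exactly this range, each of the $m$ summands is at most $s_k$, so
\[
\sum_{i=0}^{m-1} F\bigl(\overline{a_{ki+k-1}\cdots a_{ki}}\bigr) \leq m\, s_k.
\]
Equating the two sides yields the necessary condition $b^{km-1} \leq m\, s_k$.

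Second, I would reduce this to the pure inequalities recorded in Lemma~\ref{pnh}. Since $k \geq 1$ we have $km - 1 \geq m - 1$, whence $b^{km-1} \geq b^{m-1}$; and as soon as $m \geq s_k$ we have $m\, s_k \leq m^2$. Therefore, whenever $m \geq s_k$ and $b^{m-1} > m^2$ simultaneously hold, the chain
\[
b^{km-1} \geq b^{m-1} > m^2 \geq m\, s_k
\]
contradicts $b^{km-1} \leq m\, s_k$, so no solution can have that many groups. It remains only to read off from Lemma~\ref{pnh} the exact threshold past which $b^{m-1} > m^2$ holds: this is $m \geq 7$ when $b = 2$, $m \geq 4$ when $b = 3$, $m \geq 3$ when $b = 4$, and $m \geq 2$ when $b \geq 5$. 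Combining each threshold with the requirement $m \geq s_k$ (here $s_1$ in the single-digit cases) gives precisely the four values of $m$ in the statement.

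Finally, I would assemble the conclusion. For every $m$ at least the stated threshold there is no solution with $m$ groups, so all solutions have at most $m - 1$ groups and therefore satisfy $n < b^{k(m-1)} \leq b^{km-1}$; this simultaneously proves that only finitely many group-lengths, and hence finitely many integers $n$, can occur, establishing (H), and shows that an exhaustive search need only range over $n \leq b^{km-1}$, which is the ``moreover'' claim.

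The main obstacle I anticipate is not the finiteness itself—exponential-beats-linear makes that immediate—but pinning down the \emph{sharp} constants $7, 4, 3, 2$. This forces the argument to route through the clean intermediate inequality $b^{m-1} > m^2$ rather than the weaker $b^{km-1} > m\, s_k$, and it is exactly the sharpness clauses of Lemma~\ref{pnh} (that $7, 4, 3, 2$ are the best possible lower bounds) that guarantee these thresholds cannot be lowered. I would also take care with the grouping bookkeeping, namely that there are $m$ groups contributing to the right-hand side while the left-hand side runs over $km$ digits, since it is the factor $k$ in the exponent $km - 1$ that lets the single inequality $b^{m-1} > m^2$ do the work for arbitrary $k$ once $b \geq 5$.
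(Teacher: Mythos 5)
Your proof is correct and follows essentially the same route as the paper's: bound the left side below by $b^{km-1}$ using $a_{km-1}\neq 0$, bound the right side above by $m\,s_k$, and invoke Lemma~\ref{pnh} together with $m\geq s_k$ to force $b^{km-1}>m\,s_k$ beyond the stated thresholds. The only cosmetic difference is that the paper first establishes finiteness in (H) by a separate limit argument ($b^{km-1}/(m s_k)\to\infty$) before turning to the lemma for the explicit bounds, whereas you derive both conclusions at once from the lemma; this is a harmless streamlining.
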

	
	\begin{proof}
		
		Let $s_k = \max \{F(0), F(1), F(2), \ldots, F(\underbrace{\overline{(b-1) \cdots (b-1)(b-1)}}_{k-\textrm{times}})\}$. First, we prove that the statement (H) holds. It is clear that $$ \sum _{i=0}^{m-1} F( \overline{a_{ki+k-1} \cdots a_{ki+1}a_{ki}}) \leq ms_k,$$ while $ \sum_{i=0}^{km-1} a_i b^i \geq b^{km-1}$, since $a_{km-1} \neq 0$. Note that $\lim b^{km-1} / ms = + \infty$, when $m \longrightarrow +\infty$. Consequently, there is a natural number $M$ such that for $m \geq M$, we have  that $b^{km-1} > ms_k$. This means that for such an $M$, if $m \geq M$, then we have $$ \sum_{i=0}^{km-1} a_i b^i > \sum _{i=0}^{m-1} F( \overline{a_{ki+k-1} \cdots a_{ki+1}a_{ki}}).$$ Therefore, the equation mentioned in the statement (H) has finitely many solutions. Now we go further to prove the other statements of the theorem.
		
		(1): Let $m \geq \max\{7,s\}$. By Lemma \ref{pnh}, $2^{m-1} - m^2 > 0$. Also it is clear that $2^{m-1} - ms \geq 2^{m-1} - m^2$. So for this case, according to the proof of the statement (H), if $n$ is a solution for Hardy's Apology Functional, then $n  \leq 2^{m-1}$.
		
		(2) \& (3): By considering Lemma \ref{pnh}, the proof of the statements (2) and (3) is similar to the proof of the statement (1), and therefore it is omitted.
		
		(4): Let $m \geq \max\{2,s\}$ and $b \geq 5$. Then by Lemma \ref{pnh}, $b^{m-1} - m^2 > 0$. But $b^{km-1} - ms \geq b^{m-1} - m^2$. Therefore, for this general case, if $n$ is a solution for a $k$-Hardy's Apology Functional, then $n \leq b^{km-1}$.
	\end{proof}

	\begin{example}
		Our definition for a base $b$  $k$-grouped $F$-Hardy's apology number is inspired by the following historical examples.
		
		\begin{enumerate}
			
			\item A number is called M\"{u}nchhausen in base $b$, if $n =  \overline{a_{m-1} \cdots a_1 a_0} = \sum_{i=0}^{m-1} a_i b^i $, then $n = \sum _{i=0}^{m-1} a_{i}^{a_i}$. Perhaps the most famous  M\"{u}nchhausen number in base 10 is $3435 = 3^3 + 4^4 + 3^3 + 5^5 $. For more on M\"{u}nchhausen numbers, one can refer to the paper \cite{vanBerkel2009} by Daan van Berkel.
			
			\item A number is called factorial in base $b$, if $n =  \overline{a_{m-1} \cdots a_1 a_0} = \sum_{i=0}^{m-1} a_i b^i $, then $n = \sum _{i=0}^{m-1} a_{i} !$. For example, one can easily check that $40585 = 4! + 0! + 5! + 8! +5!$. Poole \cite{Poole1971} proved in an `exhaustive' way that the only factorial numbers in base $10$ are ${1, 2, 145, 40585}$. He asked for a better proof method, which is provided in this paper.
			
			\item A number is said to be subfactorial in base $b$, if $n =  \overline{a_{m-1} \cdots a_1 a_0} = \sum_{i=0}^{m-1} a_i b^i $, then $n = \sum _{i=0}^{m-1} !a_{i} $. Note that the subfactorial of a natural number $n$, denoted by $!n$, is defined as follows: $$!n = n! \sum_{i=0}^n (-1)^i / i!.$$ An important example for subfactorial numbers is the following: $$148349 = !1 + !4 + !8 + !3 + !4 + !9.$$
			
			\item A number is $d$-perfect 1-digit invariant in base $b$, if $n =  \overline{a_{m-1} \cdots a_1 a_0} = \sum_{i=0}^{m-1} a_i b^i $, then $n = \sum _{i=0}^{m-1} a_{i}^d$. For example, Hardy on page 25 of his famous historical book on mathematics mentions that ``there are just four numbers (after 1) which are the sums of the cubes of their digits \cite{Hardy1940}''. He only mentions cubic perfect (3-perfect) 1-digit invariants 153, 370, 371, 407. Quadric perfect 1-digit invariants also exist. For example, for $d=4$, we have $1634=1^4+6^4,+3^4+4^4$ other numbers belonging to this class are 8208 and 9474. For $d=5$, we have 4150, 4151, 54748, 92727, 93084, and 194979. If $d=6$, the only result below $n<10^6$ is 548834. These numbers are also coined as narcissistic, or Armstrong numbers \cite{Looijen2019, Weisstein2005}. 
			
			\item A number is a dual of a $d$-perfect 1-digit summative number in base $b$, if $n =  \overline{a_{m-1} \cdots a_1 a_0} = \sum_{i=0}^{m-1} a_i b^i $, then $n = \sum _{i=0}^{m-1} d^a_{i}$. For example, all such numbers $n<10^6$ are for $d=3,  12=3^1+3^2$. For $d=4,  4624=4^4+4^6+4^2+4^4, 595968=4^5+4^9+4^5+4^9+4^6+4^8$.  For $d>4$ there are no dual $d$-perfect 1-digit numbers. Note the difference among use of the variable `$d$': in previous and next examples the $d$ is for exponents, while in the current example it is used as the base number.
			
			\item A $(k,d)$-digit summative number in base $b$, is $n =  \overline{a_{m-1} \cdots a_1 a_0} = \sum_{i=0}^{m-1} a_i b^i $, with $n = \sum_{i=0}^{km-1} \overline{a_{ki+k-1} \cdots a_{ki+1}a_{ki}}^{d}$.  Examples for $b = 10$ with $k= 2$ and $d= 3$ are $165033 = 16^3 +50^3 + 33^3$, $221859 = 22^3 + 18^3+59^3$, $341067=34^3+10^3+67^3$, $444664=44^3+46^3+64^3$, and $487215=48^3+72^3+15^3$.
			
		\end{enumerate}
	\end{example}

	\begin{remark}
		Theorem \ref{FsumPA} is strong because it does not put any condition on $F$, but it is quite important to note that it needs $k$ to be a fixed positive integer. The reason for this is that if we suppose $k$ to be an arbitrary positive integer, then for a specific $F$, all $k$-grouped $F$-Hardy's apology numbers obtained from the $H_k$ equation in Theorem \ref{FsumPA} may have infinitely many solutions. We show this in the next section.
	\end{remark}
	
	\section{Armstrong Numbers and their Generalization}\label{sec:Armstrong}
	
	Armstrong numbers have a pretty interesting history. As Lionel Deimel says in \cite{Deimel2010}, it wasn't clear who exactly this mysterious Armstrong behind Armstrong numbers is. Apparently, someone sent an email to Deimel claiming he is the Armstrong. In the email, he says
	\begin{displayquote}
		``In the mid 1960s -- probably around 1966 -- I was teaching an elementary course in Fortran and computing in general at The University of Rochester, and “invented” Armstrong Numbers as an exercise for my students. I still have the original coffee-stained paper that was the master copy for the homework assignment..."
	\end{displayquote}
	and also sent a copy of his paper \cite{Armstrong1966} to Deimel. In it Armstrong defines four types of Armstrong numbers, from which the generalization of the last one currently is known as \textit{Armstrong numbers}:
	
	\begin{definition}
		
		\label{Armstrongnumbers}
		
		Suppose that we have an $n-$digit number in base $b$. This number is called a base-$b$ Armstrong number of $n^{\text{th}}$ order if it is equal to the sum of the $n^{\text{th}}$ power of its digits in base $b$.
	\end{definition}
	
	We can imply from \ref{FsumPA} that there are finitely many Armstrong numbers in any base $b$. This can be verified for small values of $b$: in \cite{Miller1992}, Miller and Whalen show that 12, 22, and 122 are the only Armstrong numbers in base 3 and 130, 131, 203, 223, 313, 332, 1103, and 3303 are the only Armstrong numbers in base 4. 
	
	\begin{example}\label{base10armstrong}
		There are three base-$10$ Armstrong numbers of $4^{\text{th}}$ order \cite{Weisstein2005}:
		\begin{align*}
		1634 &= 1^4+ 6^4+ 3^4 + 4^4\\
		8208 &= 8^4+ 2^4 + 0^4 + 8^4\\
		9474 &= 9^4 + 4^4 + 7^4 + 4^4.
		\end{align*}
	\end{example}
	
	Weisstein \cite{Weisstein2005} states that D. H. Winter computed \cite{WinterTable} the existing $88$ Armstrong numbers in base  $b = 2, \dots, 16$ and they are all of order $n \leq 60$. See also \cite{AntalanBadua2014}, or the OEIS database. The $11 - 16$ numbers are in sequences A161948 - A161953.
	
	The following proposition is taken from \cite{Wong2013} and may serve as an example for encryption \cite{Bansode2014}  application of Armstrong numbers, e.g. with two keys $x$ and $y$ if brought together computation should match according to formula (2.1), as executed in a chip of a security lock.
	
	\begin{proposition}
		
		\label{ewong}
		
		There are infinitely many positive integers $x=\overline{x_{k-1}\dots x_1x_0}$ and $y=\overline{y_{k-1}\dots y_1y_0}$ (in base $10$) such that:
		\begin{align}
		\overline{xy} = x^2 + y^2, \label{eq:wong1}
		\end{align}
		where $\overline{xy}$ is the concatenation of $x$ and $y$. Note the invariance of \eqref{eq:wong1} in base $b = 10$ for  $x \rightarrow 10^k - x$, where $k$ is the block length sampled in the numbers.
		
		\begin{proof}
			
			A solution to the equation \eqref{eq:wong1} is
			\begin{align}
			a=4,\quad b=10^{4u},\quad x = \displaystyle \frac{a}{17}\cdot(ab-1),\quad y = \displaystyle \frac{a}{17}\cdot (a+b)\label{eq:wong-soln},
			\end{align}
			where $u=4t+3$, for $t \geq 0$. This shows that we have found infinitely many solutions to the equation \eqref{eq:wong1}. \end{proof} 
	\end{proposition}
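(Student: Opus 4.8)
The plan is first to make the concatenation explicit. If $x$ and $y$ are both $k$-digit numbers in base $10$, then $\overline{xy} = x\cdot 10^{k} + y$, so equation \eqref{eq:wong1} is equivalent to
$$x^{2} + y^{2} = x\cdot 10^{k} + y.$$
Completing the square (multiply by $4$ and regroup) rewrites this as
$$(2x - 10^{k})^{2} + (2y - 1)^{2} = 10^{2k} + 1,$$
which exhibits the real content: a solution is nothing but a representation of $10^{2k}+1$ as a sum of two squares with the correct parities. So producing infinitely many solutions means producing infinitely many nontrivial two-square representations of the numbers $10^{2k}+1$, and I would manufacture these through the arithmetic of the Gaussian integers.

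Concretely, I would use the multiplicativity of the norm $N(a+i)=a^{2}+1$, i.e.\ the Brahmagupta--Fibonacci identity
$$(a^{2}+1)(b^{2}+1) = (ab-1)^{2} + (a+b)^{2}.$$
Choosing $a=4$, so that $a^{2}+1 = 17 = N(4+i)$, and $b = 10^{k}$ gives $17(10^{2k}+1) = (4\cdot 10^{k}-1)^{2} + (10^{k}+4)^{2}$. Dividing through by $17$---which is exactly what the factorization $17 = (4+i)(4-i)$ permits once $17 \mid 10^{2k}+1$---produces the candidate pair
$$x = \frac{4(4\cdot 10^{k}-1)}{17}, \qquad y = \frac{4(10^{k}+4)}{17},$$
which is the stated solution with $10^{k}$ playing the role of the auxiliary quantity $b$ in \eqref{eq:wong-soln}.

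What then remains is verification, in three parts. For integrality I would show $17$ divides both numerators, which reduces to the single congruence $10^{k}\equiv 13 \pmod{17}$; to pin down the admissible exponents I would compute the order of $10$ modulo $17$ (it equals $16$, with $10^{8}\equiv -1$, so $10$ is a primitive root), conclude that $10^{k}\equiv 13$ holds precisely for $k$ in one fixed residue class modulo $16$, and check that $k = 4u$ with $u \equiv 3 \pmod 4$ lands in that class---this is what yields an infinite supply of valid exponents. For the defining equation I would substitute and reduce both $x^{2}+y^{2}$ and $x\cdot 10^{k}+y$ to the common value $\tfrac{16(10^{2k}+1)}{17}$, a one-line consequence of the same identity. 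Finally I would confirm the block-length hypothesis: from $x \approx \tfrac{16}{17}10^{k}$ and $y \approx \tfrac{4}{17}10^{k}$, both $x$ and $y$ lie strictly between $10^{k-1}$ and $10^{k}$, so each has exactly $k$ digits and $\overline{xy}$ really is $x\cdot 10^{k}+y$.

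I expect the genuine difficulty to be bookkeeping rather than ideas. The two places that actually require care are isolating the congruence class of $k$ for which dividing by $17$ yields integers, and checking the digit-count condition so that the naive concatenation formula applies; the algebraic verification of the equation is immediate from the norm identity. Once these are in hand, infinitude follows at once, since infinitely many exponents $k = 4u$ with $u = 4t+3$ satisfy the congruence, each yielding a distinct pair $(x,y)$.
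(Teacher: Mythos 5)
Your proposal is correct, and it is substantially more of a proof than the paper gives: the paper's argument consists solely of exhibiting the family \eqref{eq:wong-soln} and asserting that it works, with no verification of integrality, of the equation itself, or of the digit-count condition needed for $\overline{xy}=x\cdot 10^k+y$ to be the right reading of the concatenation. You supply all three. Your derivation via $(2x-10^k)^2+(2y-1)^2=10^{2k}+1$ and the Brahmagupta--Fibonacci identity $(a^2+1)(b^2+1)=(ab-1)^2+(a+b)^2$ with $a=4$, $b=10^k$ explains \emph{why} the Fermat prime $17=4^2+1$ appears (the paper only remarks on this afterwards), and your congruence analysis is exactly right: both numerators are divisible by $17$ precisely when $10^k\equiv 13\pmod{17}$, the order of $10$ modulo $17$ is $16$ with $10^{12}\equiv 13$, and $k=4u$ with $u=4t+3$ gives $k\equiv 12\pmod{16}$, so the paper's parameterization lands in the correct class. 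The algebraic check that both sides equal $\tfrac{16}{17}(10^{2k}+1)$ and the estimate $10^{k-1}<y<x<10^k$ (so both blocks genuinely have $k$ digits) close the remaining gaps. In short: same solution family, but you prove it; the paper cites it. The one thing your write-up buys beyond completeness is that it makes the role of $17$ structural rather than accidental, which is precisely what the subsequent remark about the Fermat primes $257$ and $65537$ (with $a=16$ and $a=256$) exploits.
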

	
	\begin{example} (Some examples for \eqref{eq:wong1} in Proposition \ref{ewong}) Easy examples for the equation \eqref{eq:wong1} are:
		\begin{align}
		12^2+33^2 &= 12 \; 33\nonumber\\
		88^2+33^2 &= 88 \; 33.\nonumber
		\end{align}
		
		Larger examples for \eqref{eq:wong1} are:
		
		\begin{align}
		9412^2 + 2352^2 &= 9412 \; 2352\nonumber\\
		941176470588^2+235294117648^2 &=941176470588 \; 235294117648, \label{eq:wong2}
		\end{align}

		and finally,
		
		\begin{align}
		x = 9411764705882352941176470588 \text{ and } y = 2352941176470588235294117648. \label{eq:wong3}
		\end{align}
		
		In fact, equations \eqref{eq:wong2} and \eqref{eq:wong3} are generated by plugging $m=0$ and $m=7$ in \eqref{eq:wong-soln}. \end{example}
	
	\begin{remark}
		
		The Fermat prime $17$ appears in \eqref{eq:wong-soln}, and that happens to be a connection between the solutions to \eqref{eq:wong1} and Fermat primes. In \cite{Tito2013}, Tito Piezas III conjectured that a similar solution to \eqref{eq:wong-soln} can be found for other Fermat primes such as $257$ and $65537$. In other words, he conjectured that 
		\begin{align}
		a=16,\quad b=10^{64u}, \quad x &= \tfrac{a}{257}(ab-1), \quad y = \tfrac{a}{257}(a+b)\label{eq:wong-soln2}\\
		a=256,\quad b=10^{16384u},\quad x &= \tfrac{a}{65537}(ab-1),\quad y = \tfrac{a}{65537}(a+b)\label{eq:wong-soln3}
		\end{align}
		are solutions to \eqref{eq:wong1} for all $u=4t+3$, $t\geq 0$. V. Ponomarenko \cite{Tito2013} proved this conjecture.  In the Computational Appendix below is shown that Piezo's iteration implicitly uses two consecutive Fermat numbers and can be used to find unwieldy large Narcissistic numbers. For instance, for the Fermat prime 65537 the numbers $x$ and $y$ each have 100,000 digits and produce a correct result.
	\end{remark}

	\begin{example}
		In the previous example, using the notation of Theorem \eqref{FsumPA}, we saw that if $m=2$ is fixed and $k$ is arbitrary, then there are infinitely many solutions to
		\begin{align*}
		\sum_{i=0}^{km-1} a_i b^i = \sum _{i=0}^{m-1} F( \overline{a_{ki+k-1} \cdots a_{ki+1}a_{ki}}),
		\end{align*}
		where $F(x)=x^2$. The same is true for $m=3$ and $F(x)=x^3$. In fact, we can construct infinitely many solutions for
		\begin{align}
		\overline{xyz} = x^3 + y^3 + z^3,
		\end{align}
		given one initial solution. For instance, in \cite{Vitalis2012}, we see that starting from the initial solution
		\begin{align*}
		153   &=   1^3 + 5^3 + 3^3,
		\end{align*}
		we can construct the general solution
		\begin{align*}
		\overline{1\underbrace{66\cdots 6}_{l\text{ times}}}^3 + \overline{5\underbrace{00\cdots 0}_{l\text{ times}}}^3 + \overline{3\underbrace{33\cdots 3}_{l\text{ times}}}^3 &= \overline{166\cdots 6500\cdots 0333\cdots 3}.
		\end{align*}
		There are many more examples of this kind in \cite{Vitalis2012}.
	\end{example}

	\section{A Generalization of Wells Numbers}\label{sec:Wells}
	
	David Wells, in his book \cite{Wells1987} on page 98, explains that (after $n=1$) for $n = 22$, 23 and 24 only, the number of digits in $n!$ is equal to $n$. This example motivates us to give the following general definition. Let, in this paper, the number of digits of $n$ in base $b$ be denoted by $D_b(n)$. It is clear that if $n = \overline{a_{m-1} \cdots a_1 a_0} = \sum_{i=0}^{m-1} a_i b^i$ is the representation of the natural number $n$ in base $b$, then $D_b(n)=m$, by definition. It is easy to see that $D_b(n) = \lfloor 1+\log_b(n) \rfloor$.
	
	\begin{definition}
		
		\label{Wellsnumbers}
		
		Let $F: \mathbb N \rightarrow \mathbb N$ be a function and $F(n) = \sum_{i=0}^{m-1} a_i b^i$ be the representation of $F(n)$ in base $b\geq 2$. We call a number to be a base $b$ $F$-Wells number, if $n = D_b(F(n))(=m)$, i.e., $n$ is the fixed point element of the function $D_b \circ F$.
		
	\end{definition}
	
	\begin{proposition}
		
		\label{Henk}
		
		Let $F: \mathbb N \rightarrow \mathbb N$ be a function. A natural number $n$ is a base $b$ $F$-Wells number if and only if $b^{n-1} \leq F(n) < b^n$.
		
		\begin{proof}
			A number $n$ is a base $b$ $F$-Wells number if and only if $n=\lfloor 1+\log_b(F(n)) \rfloor$, which is equivalent to say that  $n \leq 1 + \log_b(F(n)) < n+1$ and the proof is complete.
		\end{proof}
		
	\end{proposition}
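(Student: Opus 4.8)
The plan is to convert the fixed-point condition defining a Wells number into the stated inequality through a short chain of equivalences, relying on the closed form for the digit-counting function recorded just before Definition \ref{Wellsnumbers}. First I would invoke that definition directly: by the definition of a base $b$ $F$-Wells number, $n$ is such a number precisely when $n = D_b(F(n))$, i.e. when $n$ equals the number of digits of $F(n)$ in base $b$. Since $F$ takes values in $\mathbb N$, the quantity $F(n)$ is a positive integer, so $\log_b(F(n))$ is well defined, and the formula $D_b(k) = \lfloor 1 + \log_b(k) \rfloor$ applies with $k = F(n)$. This reduces the Wells condition to the single equation $n = \lfloor 1 + \log_b(F(n)) \rfloor$.

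Next I would unfold the floor function. For any real $x$ and any integer $n$, the statement $\lfloor x \rfloor = n$ is equivalent to $n \leq x < n+1$. Applying this with $x = 1 + \log_b(F(n))$ turns the Wells condition into the double inequality $n \leq 1 + \log_b(F(n)) < n+1$, and subtracting $1$ throughout gives $n-1 \leq \log_b(F(n)) < n$. Finally, since $b \geq 2 > 1$, the map $t \mapsto b^t$ is strictly increasing, so I can exponentiate this double inequality without reversing any of the relations, obtaining $b^{n-1} \leq F(n) < b^n$. Because every step in the chain is reversible (the floor unfolding is an equivalence, and monotone exponentiation has a monotone inverse), the converse direction follows automatically, which establishes the biconditional.

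There is no substantive obstacle in this argument; it is a sequence of elementary equivalences. The only point deserving a word of care is the well-definedness of $\log_b(F(n))$, which is guaranteed by $F(n) \in \mathbb N$ (hence $F(n) \geq 1$) together with $b > 1$; these same two facts also underwrite the reversibility of the exponentiation step, so I would flag them explicitly rather than leave them implicit.
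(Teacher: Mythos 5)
Your proposal is correct and follows essentially the same route as the paper: apply $D_b(k)=\lfloor 1+\log_b(k)\rfloor$, unfold the floor into $n\leq 1+\log_b(F(n))<n+1$, and exponentiate. You merely make explicit the final exponentiation step and the well-definedness of the logarithm, which the paper leaves implicit.
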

	
	\begin{corollary}
		
		\label{FWellsnumbers}
		
		Let $F: \mathbb N \rightarrow \mathbb N$ be a function. There are finitely many bases $b$ $F$-Wells numbers if one of the following statements holds:
		
		\begin{enumerate}
			
			\item There is a natural number $N$ such that $n \geq N$ implies that $F(n) \geq b^n$.
			
			\item There is a natural number $N$ such that $n \geq N$ implies that $F(n) < b^{n-1}$.
			
		\end{enumerate}
		
		\begin{proof}
			If one of the above conditions holds, then we have at most $N-1$ base $b$ $F$-Wells numbers.
		\end{proof}

	\end{corollary}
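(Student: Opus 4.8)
The plan is to reduce everything to the two-sided characterization of Wells numbers already established in Proposition~\ref{Henk}, which states that a natural number $n$ is a base $b$ $F$-Wells number if and only if
\[
b^{n-1} \le F(n) < b^n .
\]
Once this equivalence is in hand, the corollary becomes a matter of observing that each of the two hypotheses forces one side of this inequality to fail for all sufficiently large $n$, so that only finitely many candidates survive.

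First I would fix the relevant hypothesis and argue case by case. Suppose condition (1) holds, so there is a natural number $N$ with $F(n) \ge b^n$ whenever $n \ge N$. For any such $n$ the upper bound $F(n) < b^n$ demanded by Proposition~\ref{Henk} is violated, and hence no $n \ge N$ can be a Wells number. Dually, if condition (2) holds, then for $n \ge N$ we have $F(n) < b^{n-1}$, which contradicts the lower bound $b^{n-1} \le F(n)$; so again no $n \ge N$ is a Wells number.

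In either case every base $b$ $F$-Wells number must lie in the finite set $\{1, 2, \ldots, N-1\}$, so there are at most $N-1$ of them, and in particular only finitely many. I do not expect a genuine obstacle here: essentially all of the content has been front-loaded into Proposition~\ref{Henk}, and the only point that requires any care is to pair each hypothesis with the correct half of the two-sided bound, namely that condition (1) defeats the ceiling $b^n$ while condition (2) defeats the floor $b^{n-1}$.
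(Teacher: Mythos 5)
Your argument is correct and is essentially the paper's own proof, merely spelled out: the paper's one-line justification implicitly invokes Proposition~\ref{Henk} in exactly the way you do, pairing each hypothesis with the half of the bound $b^{n-1} \le F(n) < b^n$ that it violates for $n \ge N$. No difference in approach and no gap.
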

	
	\begin{example}
		\begin{enumerate}
			
			\item If $F\in \mathbb Q(X)$ is a positive integer-valued rational function, i.e. $F(n)\in \mathbb N$ for all $n\in \mathbb N$, then it is clear that the number of $F$-Wells numbers is finite. For example, if $F(n) = n^4$, then it is clear that $10^{n-1} \leq n^4 < 10^n$ if and only if $n=1,2$.
			
			\item Let $F(n)=n!$. Then there are finitely many decimal $F$-Wells numbers and the proof is as follows: Since $n! \geq e(n/e)^n$ \cite[Exercise 10.14 p. 399]{Apostol1967} and $28/e > 10$, it is clear that if $n \geq 28$, then $n! \geq 10^n$. Therefore, if $n$ is a decimal $n!$-Wells number, then $n<28$. In fact, a simple computation shows the only $n!$-Wells numbers are 1, 22, 23, and 24.
			
			\item The only decimal $n^n$-Wells numbers are 1, 8, and 9. Because if $n\geq 10$, then $n^n \geq 10^n$. So, if $n$ is a decimal $n^n$-Wells number, then $n < 10$, to be checked as follows:
			
			$1^1 = 1$, $2^2 = 4$, $3^3 = 27$, $4^4 = 256$, $5^5 = 3125$, $6^6= 46656$, $7^7 = 523543$, $8^8 = 16777216$, and $9^9 = 387420489$.
			
			\item The only decimal $!n$-Wells numbers in base 10 are $n=24, 25$. Note that $D_{10}(!23) = 22$, while $D_{10}(!26) = 27$.
			
		\end{enumerate}
	\end{example}

	\section{A Generalization of Dudeney Numbers}\label{sec:Dudeney}
	
	Henry Ernest Dudeney (1857--1930) in his book \cite{Dudeney1967} on page 36, introduces some special numbers with the property that the cube root of these numbers is equal to the sum of their digits, which is equivalent to say that these numbers are equal to the sum of the digits of their cube. Today these numbers are called Dudeney numbers. Example of Dudeney numbers include 512 and 19683, since $\sqrt[3]{512} = 5+1+2 = 8$ and $\sqrt[3]{19683} = 1+9+6+8+3
	= 27$. For a better notation, we recall that if a non-negative integer $n$ has a representation $n = \sum_{i=0}^{m-1} a_i b^i$ in base $b \geq 2$, then the sum of the digits of $n$ is denoted by $S_b(n) = \sum_{i=0}^{m-1} a_i$ \cite[Theorem 6.5.1]{AndreescuAndrica2009}.
	
	\begin{definition}
		
		\label{Dudeneynumbers}
		
		Let $F: \mathbb N \rightarrow \mathbb N$ be a function and $F(n) = \sum_{i=0}^{m-1} a_i b^i$ be the representation of $F(n)$ in base $b\geq 2$. We call a number $n$ to be a base $b$ $F$-Dudeney number, if $n = S_b(F(n))$, where $S_b(F(n)) = \sum_{i=0}^{m-1} a_i$.
		
	\end{definition}
	
	\begin{theorem}
		
		\label{Dudeneythm}
		Let $F: \mathbb N \rightarrow \mathbb N$ be a function and $n$ a base $b$ $F$-Dudeney number. Then the following statements hold:
		
		\begin{enumerate}
			
			\item $F(n) \geq b^{\frac{n-b+1}{b-1}}$
			
			\item If there is an $N\in \mathbb N$ such that $n\geq N$ implies that $F(n) < b^{\frac{n-b+1}{b-1}}$, then there are finitely many base $b$ $F$-Dudeney numbers.
			
		\end{enumerate}
		
		\begin{proof}
			
			(1): Since $a_i \leq b-1$ and $S_b(F(n)) = \sum_{i=0}^{m-1} a_i$, we have that $S_b(F(n)) \leq (b-1)m$. But $m$ is the number of the digits of $F(n)$. So $m =  \lfloor 1+\log_b(F(n)) \rfloor$. Since $n$ is a base $b$ $F$-Dudeney number, $n \leq (b-1)\lfloor 1+\log_b(F(n)) \rfloor$. So, we have $n \leq b-1 + (b-1)\log_b(F(n))$ and finally, $F(n) \geq b^{\frac{n-b+1}{b-1}}$.
			
			(2) is just a result of (1).
		\end{proof}
		
	\end{theorem}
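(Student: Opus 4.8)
The plan is to prove part (1) directly from the defining inequalities governing digit sums, and then to obtain part (2) as an immediate contrapositive consequence. The whole argument hinges on a single elementary bound: the sum of the digits of any base-$b$ number cannot exceed $(b-1)$ times the number of its digits, because each digit $a_i$ is at most $b-1$.

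For part (1), I would start by writing $F(n)=\sum_{i=0}^{m-1}a_ib^i$ in base $b$, where $m=D_b(F(n))$ is the number of digits of $F(n)$. Since each $a_i\leq b-1$, summing over the $m$ digits gives $S_b(F(n))=\sum_{i=0}^{m-1}a_i\leq (b-1)m$. Now I invoke the Dudeney hypothesis $n=S_b(F(n))$, which converts this into $n\leq (b-1)m$. The next step is to eliminate $m$ in favor of $F(n)$ using the digit-counting formula $m=\lfloor 1+\log_b(F(n))\rfloor$ recorded earlier in the paper; since $\lfloor x\rfloor\leq x$, this yields $n\leq (b-1)\bigl(1+\log_b(F(n))\bigr)$. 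Finally I would solve this inequality for $F(n)$: subtracting $b-1$ and dividing by $b-1$ gives $\frac{n-b+1}{b-1}\leq \log_b(F(n))$, and exponentiating base $b$ produces $F(n)\geq b^{\frac{n-b+1}{b-1}}$, which is exactly the claim.

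For part (2), the argument is purely logical and requires no new computation. Part (1) asserts that \emph{every} base $b$ $F$-Dudeney number $n$ satisfies $F(n)\geq b^{\frac{n-b+1}{b-1}}$. The hypothesis of (2) supplies an $N$ such that every $n\geq N$ satisfies the strict reverse inequality $F(n)<b^{\frac{n-b+1}{b-1}}$. These two statements are incompatible, so no integer $n\geq N$ can be a Dudeney number; hence all Dudeney numbers lie in $\{1,\dots,N-1\}$, and there are only finitely many of them.

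I do not anticipate a genuine obstacle here, as the proof is short and self-contained. The only point demanding a moment's care is the direction of the floor estimate: I must use $\lfloor 1+\log_b(F(n))\rfloor\leq 1+\log_b(F(n))$ so that the upper bound $n\leq (b-1)m$ propagates correctly into a \emph{lower} bound on $F(n)$. As long as that inequality is applied in the right direction, every step is an exact manipulation and the conclusion follows.
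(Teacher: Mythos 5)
Your argument for part (1) is exactly the paper's: bound $S_b(F(n))$ by $(b-1)m$, substitute $m=\lfloor 1+\log_b(F(n))\rfloor$, drop the floor, and solve for $F(n)$; your part (2) is the same immediate consequence, just spelled out slightly more explicitly than the paper's one-line remark. The proposal is correct and takes essentially the same approach.
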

	
	\begin{corollary}
		
		\label{DudeneyCor}
		Let $F: \mathbb N \rightarrow \mathbb N$ be a function such that $\displaystyle \lim_{n \rightarrow +\infty} \frac{F(n)}{b^{\frac{n-b+1}{b-1}}} = 0$. Then there are finitely many base $b$ $F$-Dudeney numbers. In particular, if $F\in \mathbb Q[X]$ is a polynomial function such that $F(n)\in \mathbb N$ for all $n\in \mathbb N$, then there are finitely many base $b$ $F$-Dudeney numbers.
	\end{corollary}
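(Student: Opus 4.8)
The plan is to deduce the hypothesis of Theorem \ref{Dudeneythm}(2) from the vanishing of the stated limit and then invoke that theorem. Since $\lim_{n\to+\infty} F(n)/b^{(n-b+1)/(b-1)} = 0$, the definition of the limit applied with tolerance $\varepsilon = 1$ furnishes a natural number $N$ such that $F(n)/b^{(n-b+1)/(b-1)} < 1$ whenever $n \geq N$; equivalently, $F(n) < b^{(n-b+1)/(b-1)}$ for all $n \geq N$. This is exactly the assumption appearing in part (2) of Theorem \ref{Dudeneythm}, and so that theorem yields at once that there are only finitely many base $b$ $F$-Dudeney numbers.

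For the assertion about polynomials, I would first record that the quantity $b^{(n-b+1)/(b-1)}$ is an honest exponential in $n$. Rewriting the exponent as $\frac{n-b+1}{b-1} = \frac{n}{b-1} - 1$ gives
\[
b^{\frac{n-b+1}{b-1}} = \frac{1}{b}\,\beta^{\,n}, \qquad \text{where } \beta := b^{1/(b-1)}.
\]
Because $b \geq 2$ forces $b > 1$ and $1/(b-1) > 0$, we have $\beta = b^{1/(b-1)} > 1$, so the denominator grows like a geometric sequence with ratio strictly greater than one.

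Now if $F \in \mathbb Q[X]$ is a polynomial with $F(n) \in \mathbb N$ for all $n \in \mathbb N$, then $F(n)$ grows at most polynomially in $n$, whereas the denominator grows exponentially with base $\beta > 1$. The standard fact that exponential growth outpaces polynomial growth then gives $\lim_{n\to+\infty} F(n)/b^{(n-b+1)/(b-1)} = 0$, so the first part of the corollary applies and there are finitely many base $b$ $F$-Dudeney numbers.

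I do not expect any real obstacle here, since Theorem \ref{Dudeneythm}(2) has already carried out the substantive work of turning a size estimate into a finiteness statement. The only point requiring a moment of care is verifying that $\beta = b^{1/(b-1)} > 1$ for every integer $b \geq 2$; this is what guarantees the denominator is genuinely exponential and hence dominates any polynomial, and it is immediate from $b > 1$.
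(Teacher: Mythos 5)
Your proposal is correct and follows exactly the route the paper intends: the corollary is stated without an explicit proof precisely because it reduces to Theorem \ref{Dudeneythm}(2) via the definition of the limit with $\varepsilon = 1$, and the polynomial case follows because $b^{\frac{n-b+1}{b-1}} = \frac{1}{b}\bigl(b^{1/(b-1)}\bigr)^{n}$ grows exponentially with ratio $b^{1/(b-1)} > 1$, which dominates any polynomial. Nothing further is needed.
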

	
	\begin{example}
		
		\begin{enumerate}
			
			\item A Dudeney number is a positive integer such that the sum of its decimal digits is equal to the cube root of the number. There are exactly seven such integers (sequence A061209 in the OEIS): 0, 1, 512, 4913, 5832, 17576, and finally, 19683.
			
			\item Let $\mathcal{F}: \mathbb N \rightarrow \mathbb N$ be the Fibonacci sequence, the sequence that is defined as follows: $\mathcal{F}_{n+2} = \mathcal{F}_{n+1} + \mathcal{F}_n$ for all $n\in \mathbb N$ and $\mathcal{F}_1 = 1$,  $\mathcal{F}_2 = 1$ and $\mathcal{F}_3= 2, \mathcal{F}_4 = 3, \mathcal{F}_5 = 5, \mathcal{F}_6 = 8, \mathcal{F}_7 = 13, \mathcal{F}_8= 21, \mathcal{F}_9= 34, \mathcal{F}_{10} = 55.$ The specific Fibonacci-Dudeney numbers are: $\mathcal{F}_1$, $\mathcal{F}_5$, and $\mathcal{F}_{10}$.
			
		\end{enumerate}
		
	\end{example}
	
	Let $k(n)$ be the function that gives the sum of the $F$ of each digit of a positive integer $n$. Also let $m$ be the number of digits in integer $n$. Then $10^{m-1} \leq n < 10^m$. Since 9 is the largest possible digit, we conclude that $k(n) \leq m\cdot s$. Therefore, we have that $$10^{m-1} \leq n \leq ms.$$ In other words, $m$ needs to satisfy this inequality: $$\displaystyle 10 \leq (ms)^{\frac{1}{m-1}}$$
	
	\section{The Powers of the Sum of the Digits of a Number}\label{sec:PowersSumDigits}
	
	In this short section, we show that the number of natural numbers which are equal to the sum of their digits raised to a specific power is finite.
	
	\begin{theorem}
		
		\label{PowersSumDigits}
		
		Let $n$ be a natural number and set $S_b(n)$ to be the sum of the digits of the number $n$ in base $b$. Let $p \geq 2$ be a natural number and define $\phi_p : \mathbb N \rightarrow \mathbb N$ with $\phi_p (n) = (S_b(n))^p$. Then the number of natural numbers satisfying the equality $\phi_p (n) = n$ is finite. Moreover, if $\phi_p (n) = n$ then $n \leq b^{p^2}$.
		
		\begin{proof}
			Let $n = \sum_{i=0}^{m-1} a_i b^i$ be the representation of the number $n$ is base $b$. It is clear that $S_b(n) \leq (b-1)m$, where $m = \lfloor 1+\log_b(n)\rfloor$. So if a number $n$ satisfies the equality $\phi_p (n) = n$, then it needs to satisfy the inequality $n \leq (b-1)^p (\lfloor 1+\log_b(n) \rfloor)^p$.
			
			Now define a real function $f(x) = x - (b-1)^p (1+\log_b(x))^p$. It is clear that $\displaystyle f^{\prime}(x) = 1- \frac{p(b-1)^p}{x\ln{a}} (1+\log_b(x))^{p-1}$, which shows that for large enough real numbers $x$, the function $f$ is increasing.
		\end{proof}
		
	\end{theorem}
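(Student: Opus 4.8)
The plan is to reduce a solution to the single real quantity $s := S_b(n)$, so that the equation $\phi_p(n)=n$ becomes simply $n = s^{p}$. The only structural fact about digit sums that I need is the elementary bound already exploited in Theorem \ref{Dudeneythm}: writing $m = D_b(n) = \lfloor 1+\log_b n\rfloor$ for the number of base-$b$ digits of $n$, one has $S_b(n)\le (b-1)m$. Substituting $n=s^{p}$ yields the master inequality that every solution must satisfy,
\[ s \;=\; S_b(s^{p}) \;\le\; (b-1)\,\lfloor 1+\log_b(s^{p})\rfloor \;\le\; (b-1)\bigl(1+p\log_b s\bigr), \]
equivalently $n = s^{p}\le (b-1)^{p}\bigl(1+\log_b n\bigr)^{p}$. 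Everything else is extracting consequences of this one inequality.

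For finiteness I would argue exactly as the displayed proof begins. Set $f(x)=x-(b-1)^{p}(1+\log_b x)^{p}$, so that any solution obeys $f(n)\le 0$. Differentiating gives $f'(x)=1-\tfrac{p(b-1)^{p}}{x\ln b}(1+\log_b x)^{p-1}$, and the subtracted term, being a fixed power of $\log x$ divided by $x$, tends to $0$; hence there is a threshold beyond which $f'>0$, and on that range $f$ increases to $+\infty$. Consequently $f$ has a largest real zero $x_0$, and $f(n)\le 0$ forces $n\le x_0$. Since only finitely many natural numbers lie below $x_0$, there are finitely many solutions.

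For the explicit bound $n\le b^{p^{2}}$ I would pass to the equivalent statement $s\le b^{p}$ (recall $n=s^{p}$) and read the master inequality through the auxiliary function $h(s)=s-(b-1)(1+p\log_b s)$, for which every solution satisfies $h(s)\le 0$. As with $f$, one checks $h'(s)=1-\tfrac{(b-1)p}{s\ln b}>0$ once $s$ exceeds the small constant $(b-1)p/\ln b$, so $h$ is increasing on $[b^{p},\infty)$. It therefore suffices to verify $h(b^{p})\ge 0$, i.e.
\[ b^{p}\;\ge\;(b-1)\bigl(1+p^{2}\bigr); \]
granted this, $h(s)>h(b^{p})\ge 0$ for every $s>b^{p}$, contradicting $h(s)\le 0$, whence $s\le b^{p}$ and thus $n\le b^{p^{2}}$.

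The main obstacle is precisely the inequality $b^{p}\ge (b-1)(1+p^{2})$: it is an exponential-beats-quadratic comparison of the same flavour as Lemma \ref{pnh}, and it holds for all $b\ge 2$, $p\ge 2$ except for the handful of small pairs $(b,p)\in\{(2,2),(2,3),(2,4),(3,2)\}$, where $(b-1)(1+p^{2})$ narrowly exceeds $b^{p}$. For those finitely many exceptional pairs the soft comparison is unavailable, so I would close the bound by brute force instead: the master inequality already confines $s$, and hence $n=s^{p}$, to a small explicit range in each case, and a direct check of the finitely many candidate digit sums confirms that every genuine solution still satisfies $n\le b^{p^{2}}$ (for instance, for $(2,2)$ the only solution is $n=1\le 16$). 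Thus the delicate point is not finiteness, which is automatic, but squeezing the clean closed-form constant $b^{p^{2}}$ out of a bound that the asymptotics alone guarantee only up to a multiplicative slack.
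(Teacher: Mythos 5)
Your proposal is correct, and for the finiteness claim it follows the same route as the paper: the master inequality $n \le (b-1)^p\bigl(1+\log_b n\bigr)^p$, the auxiliary function $f(x)=x-(b-1)^p(1+\log_b x)^p$, and the observation that $f'>0$ beyond some threshold. You actually finish that argument (noting that $f$ then increases to $+\infty$, so $f(n)\le 0$ confines $n$ below the largest zero of $f$), whereas the paper's proof stops at ``$f$ is increasing for large $x$'' and, more importantly, never addresses the \emph{moreover} clause $n\le b^{p^2}$ at all. Your treatment of that clause is therefore genuinely new relative to the paper: substituting $n=s^p$ with $s=S_b(n)$, reducing to $h(s)=s-(b-1)(1+p\log_b s)\le 0$, and observing that $s\le b^p$ follows once $h(b^p)\ge 0$, i.e.\ once $b^p\ge (b-1)(1+p^2)$ --- an exponential-versus-quadratic comparison in the spirit of Lemma \ref{pnh}. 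Your identification of the exceptional pairs $(b,p)\in\{(2,2),(2,3),(2,4),(3,2)\}$, where that inequality fails and the stated bound $n\le b^{p^2}$ is not a formal consequence of the digit-sum estimate alone, is a necessary refinement the paper lacks: for $(b,p)=(2,2)$, for instance, the master inequality only yields $s\le 6$, hence $n\le 36>2^4=16$, so one must check directly that no solution lies in $(16,36]$. The one soft spot is that you assert rather than execute those four finite verifications; each involves only a handful of candidate values of $s$ (e.g.\ $s\le 11$ for $(2,3)$, $s\le 17$ for $(2,4)$, $s\le 10$ for $(3,2)$), and carrying them out does confirm the bound, so writing them down would make the argument airtight.
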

	
	\section{Computational Appendix}
	
	\begin{appendix}
		To check results as obtained in this paper, we show a few algorithms to do the work. The programming language used is the Maple programming language. In the first paper on finding narcissistic numbers, the BASIC language was used \cite{Ecker1984}. A conversion $C$ of a decimal integer to a list of its single digits is: \[C := (b, n) \rightarrow convert(n, base, b)\] and its inverse is Maple's concatenation of digits: \[J := n \rightarrow Joinsequence(n)\]
		The sum of a list $a$ of numbers is: \[S := a \rightarrow add(i, i \; \epsilon \; a)\]
		The operator $Db$ for the digit length of a number $n$ is: \[Db := n \rightarrow length( n )\]
		
		By following Curry and Feys' \cite{CurryFeys1974} notation of operators and operator sequences, we deviate from the common use of Category Theory \cite{BirdMoor1997} to express algorithms. 
		This considerably simplifies notation, as follows:\\
		
		\begin{description}
			\item[H] Hardy's Apology numbers satisfy the fixed points:  $n = S(F(C(n)))$.
			\item[D] Dudeney numbers are the fixed points:  $n = F(S(C(n)))$. 
			\item[W] Wells numbers satisfy: $n = Db(F(n))$.
		\end{description}
		
		Permutation of the order of operators might be beneficial for the discovery of new numbers. For instance, in the reverse order of operators of the Wells numbers $n = F(Db(n))$ with 
		$f \epsilon F,  f := x \rightarrow x^4$, all fixed points are $n = 0, 1, 32, 243, 1024$.\\
		
		The algorithm to find the fixed points in Hardy's Apology Theorem is  $n = S(F(C(n)))$ needs precautions 
		for data handling from one operator to the next, and with the fixed point criterion at the end as a haltings criterion, as follows:\\
		
		\begin{verbatim}
		HApolTh := proc(b, k, n, F)  local a, x; 
		a := C(b^k, n);   x := [ seq( F(a[i]), i = 1 ... length(a)) ]; 
		if n = S(x) then    n   end if 
		end proc
		\end{verbatim}
		
		For example, $f \epsilon F, f := x \rightarrow x^5$, seq(HApolTh(10, 1, n, f), n = 1 ... $10^5$) exhausts all the fixed points $n =  1, 4150, 4151, 54748, 92727, 93084$, as seen from our upper bounds limit in the Apology Theorem.
		
		The search can be limited as was urgently needed with historical computers \cite{Lamb1986} by use of our Hardy's Apology Theorem \ref{FsumPA}. Take  $f \epsilon F, f := x \rightarrow x^2$, in base $b = 10$ is $9$ the maximum digit. So, the inequality $m (9^2) < 10^m$, where $m$ is the length of the number, is false if $m=1,2$. Hence, single and two digits numbers cannot be narcissistic, but with $m=3$, we have $243<1000$. Concluding, the 3-digit numbers in base 10 cannot have squared  1-digit sums larger than 243, which is the limit for search. Continuing this we even could design automated upper bounds for search with digit blocks $k=2,3, ...$, etc. 
		
		Piezas' computation of narcissistic numbers as formulated by \cite{Tito2013} is speeded-up by his inclusion of Fermat steps, with the Ith Fermat number of $n$ and its predecessor:\\
		
		\begin{verbatim}
		Piezas := proc(k, n)  local a, b, fe, l, m, x, y; 
		
		fe := IthFermat(n);        l := (fe-1)/4; m := 4*k+3;  
		a := IthFermat(n-1)-1; b := 10^(l*m); 
		x := a*(a*b-1)/fe;           y := a*(a+b)/fe; 
		print(k, m, n, a, l, fe, x, y, x^2+y^2)
		end proc
		\end{verbatim}
		
		The algorithm is appropriate for fast computing of extremely long narcissistic numbers with hundreds of thousands of digits (on a PC).
		
		As quoted in our introduction  Hardy's remark ``8712 and 9801 are the only four-figure numbers which are integral multiples of their reversals'', he introduced the eigenvalue $\lambda$ for the consecutive application of operations $C$, permutation $P$ and join $J$  as follows:
		
		$$2178 = J(P(C(8712)  \rightarrow 8712 = \lambda J(P(C(8712) \rightarrow \lambda = 4$$
		
		Using eigenvalues is studied to its full extent by Lara Pudwell \cite{Pudwell2007} and Sutcliffe \cite{Sutcliffe1966}. Lara Pudwell in her paper \cite{Pudwell2007} relates the story about Hardy's short-sighted second remark above. Sutcliffe \cite{Sutcliffe1966} generalized the problem to reversals in any base. 
		
	\end{appendix}
	
	\subsection*{Acknowledgments}
	
	The second named author is supported by the Department of Engineering Science at the Golpayegan University of Technology and his special thanks go to the Department for providing all necessary facilities available to him for successfully conducting this research.
	
	\bibliographystyle{plain}

\begin{thebibliography}{15.}
		
		\bibitem{AghaieabianeKoppelaarNasehpour20171} Aghaieabiane, N., Koppelaar, H., Nasehpour, P. (2017) An improved algorithm to reconstruct a binary tree from its inorder and postorder traversals, \textit{Journal of Algorithms and Computation}, 49(1), 93--113.
		
		\bibitem{AghaieabianeKoppelaarNasehpour20172} Aghaieabiane, N., Koppelaar, H., Nasehpour, P. (2017) A novel algorithm to determine the leaf (leaves) of a binary tree from its preorder and postorder traversals, \textit{Journal of Algorithms and Computation}, 49(2), 1--11.
		
		\bibitem{AndreescuAndrica2009} Andreescu, T., Andrica, D. (2009) \textit{Number Theory, Structures, Examples, and Problems}, Birkh\"{a}ser, Basel.
		
		\bibitem{AntalanBadua2014} Antalan, J.R.M., Badua, P.M. (2014) Palindromic Armstrong and mono palindromic Armstrong numbers, \textit{International Journal of Mathematical Archive}, Vol. 5, No. 12, 80--83.
		
		\bibitem{Apostol1967} Apostol, T.M. (1967) \textit{Calculus}, Vol. I, 2nd Ed., John Wiley \& Sons Inc., New York.
		
		\bibitem{Armstrong1966} Armstrong, M.F. (1966) \textit{A brief introduction to Armstrong numbers}, Lionel Deimel\textsc{\char13}s website, Available online at: \url{http://deimel.org/rec_math/armstrong.pdf}
		
		\bibitem{Azarang2011} Azarang, A. (2011) The Persian Tarof in Mathematics, \textit{The Math. Intelligencer}, Vol. 33, No. 4, 1--1.
		
		\bibitem{Bansode2014} Bansode, A., Joshi, A., Singh, A., Gosavi, K., Halgaonkar, P.S., Wadhai, V.M. (2014) Data security in message passing using Armstrong number, \textit{Int. Journal of Computer Science Trends and Technology (IJCST)}, Vol. 2, No. 2, 20--23.
		
		\bibitem{Belose2012} Belose, S., Malekar, M., Dhamal, S., Dharmawat, G., Kulkarni, N.J. (2012) Data security using Armstrong numbers, \textit{Undergraduate Academic Research Journal (UARJ)}, Vol. 1, No. 1, 80--83.
		
		\bibitem{BirdMoor1997} Bird, R., de Moor, O. (1997) \textit{Algebra of Programming}, Pearson Education, London.
		
		\bibitem{Blomer2008} Blomer, V. (2008) Ternary quadratic forms, and sums of three squares with restricted variables, \textit{Anatomy of Integers, Am. Math. Soc.}, CRM Proc. and Lecture Notes, Vol. 46, 1--17.
		
		\bibitem{BourgainKatzTao2006} Bourgain, J., Katz, N., Tao, T. (2006) \textit{A sum-product estimate in finite fields and applications}, ArXiv Math., Available online at: \url{https://arxiv.org/abs/math/0301343}
		
		\bibitem{ClementsLindstrom1965} Clements, C. F., Lindstr\"{o}m, B. (1965) A sequence of $(\pm 1)$ determinants with large values, \textit{Proc. Amer. Math. Soc.}, Vol. 16, 548--550.
		
		\bibitem{CurryFeys1974} Curry, H.B., Feys, R. (1974) \textit{Combinatory Logic}, Vol. I, 3rd Ed., North-Holland Publishing, Amsterdam.
		
		\bibitem{DavisMahowald1975} Davis, D.M., Mahowald, M. (1975) A strong non-immersion theorem for $RP^{81+7}$, \textit{Bull. Amer. Math. Soc.}, Vol. 81, 155--156.
		
		\bibitem{DeepaKannimuthuKeerthika2011} Pavithra Deepa, S., Kannimuthu, S., Keerthika, V. (2011) Security using colors and Armstrong numbers, \textit{Proc. of the National Conference on Innovations in Emerging Technology-2011},
		Kongu Engineering College, Perundurai, Erode, Tamilnadu, India, 17 \& 18 February, 157--160.
		
		\bibitem{Deimel2010} Deimel, L. (2010) \textit{Mystery solved. History of Armstrong numbers}, Lionel Deimel\textsc{\char13}s Web Log, Available online at: \url{http://blog.deimel.org/2010/05/mystery-solved.html}, fetched 2019/02/08.
		
		\bibitem{Dudeney1967} Dudeney, H.E. (1968) \textit{536 Puzzles \& Curious Problems}, Souvenir Press, London.
		
		\bibitem{Ecker1984} Ecker, M.W. (1984) Narcissistic numbers, \textit{Popular Computing}, 182--184.
		
		\bibitem{ErdosLucaPom2008} Erd\"{o}s, P., Luca, F., Pomerance, C. (2008) On the proportion of numbers coprime to a given integer, \textit{Anatomy of Integers}, Am. Math. Soc., CRM Proc. and Lecture Notes, Vol. 46, 47--64.
		
		\bibitem{GitlerMahowaldMilgram1968} Gitler, S., Mahowald, M., Milgram, R.J. (1968) The non-immersion problem for $RP^n$ and higher order cohomology operations, \textit{Proc. Nat. Acad. Sci. USA}, Vol. 60, 432--437.
		
		\bibitem{Hardy1940} Hardy, G.H. (1940) \textit{A Mathematician's Apology}, Cambridge University Press, Cambridge.
		
		\bibitem{Hirsch1967} Hirsch, G. (1967) On a property of the $b$-adic expressions of integers, \textit{Amer. Math. Monthly}, Vol. 74,  561--563.
		
		\bibitem{JelodarMoazzamiNasehpour2016} Jelodar, D., Moazzami, D., Nasehpour, P. (2016) On the tenacity of cycle permutation graph. \textit{Journal of Algorithms and Computation}, 48(1), 37--44.
		
		\bibitem{Lamb1986} Lamb Jr., J.F. (1986) Searching for narcissistic numbers by computer, \textit{School Science and Mathematics}, Vol. 86, No. 1, 39--42.
		
		\bibitem{Lindstrom1964} Lindstrom, B. (1964) On a combinatorial detection problem I, \textit{Magyar Tud. Akad.} Kutato Int. Kozl., 195--207.
		
		\bibitem{Lindstrom1965} Lindstrom, B. (1965)  On a combinatorial problem in number theory, \textit{Canad. Math. Bull.}, Vol. 8,  477--490.
		
		\bibitem{Lindstrom1966} Lindstrom, B. (1966)  On a combinatorial detection problem II, \textit{Studia Sci. Hungar.}, Vol. 1, 353--361.
		
		\bibitem{Looijen2019} Looijen, M. (2019) \textit{Talking about numbers}, 3rd Ed., Van Haren Publ., the Netherlands.
		
		\bibitem{Madachy1966} Madachy, J.S. (1966) \textit{Mathematics on Vacation}, Charles Scribner's Sons, New York.
		
		\bibitem{Sigmaa2018} Mathematical Association of America (2018) \textit{SIGMAA on Recreational Mathematics}, Available online at: \url{http://sigmaa.maa.org/rec/}, fetched 2019/02/09.
		
		\bibitem{Miller1992} Miller, G.L., Whalen, M.T. (1992) Armstrong numbers: $\overline{153}=1^3+5^3+3^3$, \textit{The Fibonacci Quarterly}, 221--224.
		
		\bibitem{Nasehpour2020} Nasehpour, P. (2020) A computational criterion for the irrationality of some real numbers, \textit{Journal of Algorithms and Computation}, 52(1), 97--104.
		
		\bibitem{PatelSiksek2017} Patel, V., Siksek, S. (2017) On powers that are sums of consecutive like powers, \textit{Res. Number Theory}, 1--7.
		
		\bibitem{Pickover1995} Pickover, C.A. (1995) \textit{Keys to Infinity}, John Wiley \& Sons, Inc., New York.
		
		\bibitem{Poole1971} Poole, G.D. (1971) Integers and the sum of the factorials of their digits, \textit{Mathematics Magazine}, Vol. 44, 278--279.
		
		\bibitem{Pudwell2007} Pudwell, L. (2007) Digit reversal without apology, \textit{Mathematics Magazine}, Vol. 80, 129--132.
		
		\bibitem{Stolarsky1977} Stolarsky, K.B. (1977) Power and exponential sums of digital sums related to binomial coefficient parity, \textit{SIAM J. Appl. Math.}, Vol. 4, No. 32, 717--730.
		
		\bibitem{Sutcliffe1966} Sutcliffe, A. (1966) Integers that are multiplied when their digits are reversed, \textit{Mathematics Magazine}, Vol. 39, 282--287.
		
		\bibitem{Tito2013} Ponomarenko, V., Piezas, T. (2013) Fermat primes and the equation $x^2+y^2 = 10^n x+y$, Mathematics Stack Exchange, Available online at: \url{https://math.stackexchange.com/q/401513} (version: 2013/04/13), fetched 2019/02/8.
		
		\bibitem{vanBerkel2009} van Berkel, D. (2009) \textit{On a curious property of 3435}, ArXiv Math., Available online at: \url{https://arxiv.org/abs/0911.3038}.
		
		\bibitem{Vitalis2012} Vitalis, B. (2013) \textit{$\overline{ABC} = A^3 + B^3 + C^3$}, Fun With Num3ers [Wordpress Blog], Available online at: \url{https://benvitalenum3ers.wordpress.com/2012/08/25/a-b-c-a3-b3-c3/} (version: 2013/08/25), fetched 2019/02/08.
		
		\bibitem{Weisstein2005} Weisstein, E.W. (2005) \textit{Narcissistic number}, From MathWorld--A Wolfram Web Resource, Available online at: \url{http://mathworld.wolfram.com/NarcissisticNumber.html}
		
		\bibitem{Wells1987} Wells, D. (1987) \textit{The Penguin Dictionary of Curious and Interesting Numbers}, Penguin, London.
		
		\bibitem{Wilansky1982} Wilansky, A. (1982) Smith numbers, \textit{Two-Year College Math. J.}, Vol. 13, No. 1, 21--21.
		
		\bibitem{WinterTable} Winter, D. H. (2003) \textit{Table of Armstrong Numbers}, original page no longer available, published not later than Aug. 2003,  latest backup on web.archive.org from Jan. 2010, Available online at:  \url{http://web.archive.org/web/20100109234250/http://ftp.cwi.nl:80/dik/Armstrong}, fetched 2019/02/07.
		
		\bibitem{Wong2013} Wong, E. (2013) \textit{Pell's type equation in sum}, Mathematics Stack Exchange, Available online at:  \url{https://math.stackexchange.com/q/389752}, fetched 2019/02/08.
		
		\bibitem{Zollner1985} Z\"{o}llner, J. (1985) \"{U}ber eine Vermutung von Choi, Erd\"{o}s und Nathanson, \textit{Acta Arithmetica}, Vol. XLV, 211--213.
		
	\end{thebibliography}

\end{document}